\documentclass[draftclsnofoot,onecolumn]{IEEEtran}
\usepackage{def}
\hyphenation{op-tical net-works semi-conduc-tor IEEE-Xplore}

\begin{document}

\title{Smoothness of the Augmented Lagrangian Dual in Convex Optimization}

\author{Jingwang Li and Vincent Lau, \IEEEmembership{Fellow, IEEE}
  \thanks{The authors are with the Department of Electronic and Computer Engineering, The Hong Kong University of Science and Technology, Hong Kong 99077, China (e-mail: jingwangli@outlook.com; eeknlau@ust.hk).}
  \thanks{\textit{Corresponding author: Vincent Lau.}}
}

\maketitle

\begin{abstract}
  This paper focuses on the general linearly constrained optimization problem: $\min_{x \in \mathbb{R}^d} f(x) \ \text{s.t.} \ Ax = b$, where $f: \mathbb{R}^d \rightarrow \mathbb{R} \cup \{+\infty\}$ is a closed proper convex function, $A \in \mathbb{R}^{p \times d}$, and $b \in \mathbb{R}^p$. We define the standard dual function $\phi(\lambda) = \inf_x \{f(x) + \langle \lambda, A x - b \rangle\}$, the augmented Lagrangian $\mathcal{L}_{\rho}(x, \lambda) = f(x) + \langle \lambda, Ax - b \rangle + \frac{\rho}{2}\|Ax - b\|^2$ ($\rho > 0$), and the augmented Lagrangian dual function $\phi_{\rho}(\lambda) = \inf_x \mathcal{L}_{\rho}(x, \lambda)$. Under the fundamental condition that $\text{dom} \ \phi \neq \emptyset$, we establish that: (1) $\phi_{\rho}$ is $\frac{1}{\rho}$-smooth everywhere; and (2) the solution to $\min_{x \in \mathbb{R}^d} \mathcal{L}_{\rho}(x, \lambda)$ exists for any $\lambda \in \mathbb{R}^p$. These theoretical findings substantially weaken the stringent assumptions typically imposed in the literature to ensure such properties.
\end{abstract}

\begin{IEEEkeywords}
  Convex optimization, augmented Lagrangian dual, smoothness, existence of solutions.
\end{IEEEkeywords}

\IEEEpeerreviewmaketitle

\section{Introduction}
In this paper, we consider the general linearly constrained optimization problem:
\begin{equation} \label{main_pro} \tag{P1}
  \begin{aligned}
  \min_{x \in \R^d} \ &f(x) \\
  \st \ &A x = b,
\end{aligned}
\end{equation}
where $f: \R^d \rightarrow \exs$ is a convex but possibly nonsmooth function, $A \in \R^{p \times d}$, and $b \in \R^p$. Without loss of generality, we assume that the solution to \cref{main_pro} exists. The augmented Lagrangian method (ALM), originally proposed by Hestenes \cite{hestenes1969multiplier} and Powell \cite{powell1969method}, remains a cornerstone algorithm for solving \cref{main_pro}. Its iterative scheme is given by
\eqa{
  x\+ &= \arg\min_{x \in \R^d} \cL_{\rho}(x, \lambda^k), \label{ALM_p} \\
  \lambda\+ &= \lambda^k + \rho (A x\+ - b), \label{ALM_d}
}
where $\lambda \in \R^p$ is the dual variable, $\rho > 0$ is the augmented parameter, and the augmented Lagrangian $\cL_{\rho}$ is defined as
\eqe{
  \cL_{\rho}(x, \lambda) &= f(x) + \dotprod{\lambda, A x - b} + \frac{\rho}{2}\norm{A x - b}^2.
}

One convenient approach for analyzing the iteration complexity of (inexact) ALM\footnote{For inexact ALM, the subproblem in \cref{ALM_p} is allowed to be solved inexactly.} \cite{nedelcu2014computational,lan2016iteration} lies in interpreting it as (inexact) gradient descent (GD) \cite{devolder2014first,schmidt2011convergence} applied to the augmented Lagrangian dual problem of \cref{main_pro}:
\eqe{ \label{dual_pro}
  \max_{\lambda \in \R^p} \ \phi_{\rho}(\lambda),
}
where the augmented Lagrangian dual function $\phi_{\rho}$ is defined as
\eqe{ \label{1522}
  \phi_{\rho}(\lambda) = \inf_x \cL_{\rho}(x, \lambda) = -f_{\rho}^*\pa{-A\T\lambda} -\lambda\T b,
}
with $f_{\rho}(x) = f(x) + \frac{\rho}{2}\norm{A x - b}^2$. Then, the convergence and outer iteration complexity of ALM can be easily established by applying the convergence results of GD. Since the convergence of GD depends critically on the objective function's smoothness and convexity, establishing the smoothness of $\phi_{\rho}$ (which is always concave) becomes pivotal.

\textbf{Existing Results and Limitations.} For the special case where $f = g + \iota_{\sX}$ with $\sX \subseteq \R^d$ being a compact convex set and $g: \sX \rightarrow \R$ continuous convex, the Weierstrass' theorem \cite[Proposition A.8]{bertsekas2016nonlinear} guarantees the existence of solutions to $\min_{x \in \R^d} \cL_{\rho}(x, \lambda)$ for any $\lambda \in \R^p$, thus implying $\dom{\phi_{\rho}} = \R^{p}$. The $\frac{1}{\rho}$-smoothness of $\phi_{\rho}$ then follows directly from \cite[Theorem 1]{nesterov2005smooth} (see \cite[Proposition 1]{lan2016iteration}). An alternative approach for establishing the smoothness of $\phi_{\rho}$ involves demonstrating the invariance of $Ax$ over the solution set of $\min_{x \in \R^d} \cL_{\rho}(x, \lambda)$ and then applying the Danskin's Theorem \cite[Proposition B.22(b)]{bertsekas2016nonlinear} (refer to \cite[Lemmas 2.1 and 2.2]{hong2017linear}). However, this analysis fails when $\sX$ is not bounded. In such cases, additional assumptions are necessary to ensure the existence of solutions to $\min_{x \in \R^d} \cL_{\rho}(x, \lambda)$; for instance, $\cL_{\rho}(\cdot, \lambda)$ is coercive\footnote{We say a function $f$ is coercive if $\lim_{\norm{x} \rightarrow \infty}f(x) = \infty$.} (which holds if $f$ is coercive). Even with these assumptions, we cannot straightforwardly apply \cite[Theorem 1]{nesterov2005smooth} or the Danskin's Theorem to establish the smoothness of $\phi_{\rho}$.

The situation becomes even more complex when $f$ does not conform to the specific form mentioned above, and we only assume that $f$ is closed proper\footnote{We say a convex function $f$ is proper if $f(x)>-\infty$ for all $x$ and $\dom{f} \neq \emptyset$. Correspondingly, a concave function $g$ is termed proper if $-g$ is proper.} convex. To ensure the existence of solutions to $\min_{x \in \R^d} \cL_{\rho}(x, \lambda)$ and the smoothness of $\phi_{\rho}$, we may need to introduce additional assumptions. For example, assuming that $f$ is strongly convex or that $A$ has full column rank. In this scenario, $\cL_{\rho}(\cdot, \lambda)$ is strongly convex, leading to a singleton solution set for $\min_{x \in \R^d} \cL_{\rho}(x, \lambda)$, and the smoothness of $\phi_{\rho}$ can be established using results on the Fenchel conjugate from \cite[Lecture 5]{vandenberghe2022om}. However, these assumptions are too restrictive, limiting the applicability of the results. Thus, we are interested in whether it is possible to establish the aforementioned results for general closed proper convex functions $f$.

\textbf{Our Contributions.} Define the standard dual function of \cref{main_pro} as
\eqe{
  \phi(\lambda) = \inf_x\set{f(x) + \dotprod{\lambda, A x - b}}.
}
This paper resolves this gap by proving that for \textit{any} closed proper convex $f$, under the fundamental condition $\dom{\phi} \neq \emptyset$:
\begin{enumerate}
  \item The augmented Lagrangian dual function $\phi_{\rho}$ is $\frac{1}{\rho}$-smooth everywhere.
  \item The solution to $\min_{x \in \R^d} \cL_{\rho}(x, \lambda)$ exists for any $\lambda \in \R^p$.
\end{enumerate}
These results reveal an interesting fact: the augmented Lagrangian dual function is generally smooth in convex optimization, significantly broadening the theoretical foundation of ALM. Consequently, convergence rates and iteration complexity bounds for both standard and accelerated (inexact) ALM variants \cite{nesterov2018lectures,he2010acceleration,kang2013accelerated} can now be rigorously established for any problem reducible to \cref{main_pro}.

\section{Preliminaries}
\subsection{Notations}
We use the standard inner product $\dotprod{\cdot, \cdot}$ and the standard Euclidean norm $\norm{\cdot}$ for vectors, and the standard spectral norm $\norm{\cdot}$ for matrices. For a matrix $B \in \R^{m \times n}$, let $\Range{B}$ denote its column space. For a function $f: \R^n \rightarrow \exs$, $\dom{f} = \set{x \in \R^n | f(x) < +\infty}$ denotes its (effective) domain (for a concave function $g: \R^n \rightarrow \R \cup \{-\infty\}$, $\dom{g} \triangleq \dom{(-g)}$), $\partial f(x)$ denotes its subdifferential at $x$, $\prox_{\alpha f}(x) = \arg\min_{y} f(y) + \frac{1}{2\alpha}\norm{y-x}^2$ denotes the proximal operator of $f$ with $\alpha > 0$, $f^*(y) = \sup_{x \in \R^n}y\T x-f(x)$ and $f^{**}(x) = \sup_{y \in \R^n}x\T y-f^*(y)$ denote its Fenchel conjugate and Fenchel biconjugate respectively, and $M_{\gamma f}(x) = \inf_{y}\set{f(y) + \frac{1}{2\gamma}\norm{x-y}^2}$ denotes its Moreau envelope (or Moreau-Yosida regularization) with $\gamma>0$. For a set $\sX \subseteq \R^n$, $\inte{\sX}$ denotes its interior, $\ri{\sX}$ denotes its relative interior, and $\iota_{\sX}(x) = \lt\{
  \begin{aligned}
    0,      \   & x \in \sX,        \\
    +\infty, \  & \text{otherwise}.
  \end{aligned}\right.$ denotes its indicator function.

\subsection{Convex Analysis}

The following lemma demonstrates the smoothness of the Moreau envelope of any closed proper convex function.
\begin{lemma} \label{moreau} \cite[Theorem 3 of Lecture 26]{davis2016mp}
  Assume that $f: \R^n \rightarrow \exs$ is closed proper convex. Then: (1) $\dom{M_{\gamma f}} = \R^n$; (2) $M_{\gamma f}$ is continuously differentiable on $\R^n$ and $\nabla M_{\gamma f}(x) = \frac{1}{\gamma}\pa{x-\prox_{\gamma f}(x)}$; (3) $M_{\gamma f}$ is convex and $\frac{1}{\gamma}$-smooth.
\end{lemma}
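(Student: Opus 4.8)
The three assertions about the Moreau envelope are classical (and the cited reference proves them), and the plan is to establish them in the order (1), then (3) together with the differentiability asserted in (2), and finally the explicit gradient formula in (2); the engine throughout is the Fenchel duality between strong convexity and smoothness. First I would fix $x \in \R^n$ and examine $g_x(y) := f(y) + \frac{1}{2\gamma}\norm{x-y}^2$. Since $f$ is closed proper convex, $g_x$ is closed, proper (its domain equals $\dom{f} \neq \emptyset$), and $\frac{1}{\gamma}$-strongly convex; as $f$ admits an affine minorant, $g_x$ is bounded below by an affine-plus-quadratic function and is therefore coercive, so its sublevel sets are compact and it attains its infimum at a unique point — which is by definition $\prox_{\gamma f}(x)$. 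The value $M_{\gamma f}(x) = g_x(\prox_{\gamma f}(x))$ is then finite: it is at most $g_x(y_0) < +\infty$ for any $y_0 \in \dom{f}$ and at least the finite minimum of the affine-plus-quadratic minorant just used. This establishes (1) and, along the way, that $\prox_{\gamma f}$ is well defined.

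Next I would observe that $M_{\gamma f}$ is the infimal convolution of $f$ and $\frac{1}{2\gamma}\norm{\cdot}^2$, equivalently the partial minimization over $y$ of the jointly convex map $(x, y) \mapsto f(y) + \frac{1}{2\gamma}\norm{x-y}^2$; hence $M_{\gamma f}$ is convex, and being convex and finite on all of $\R^n$ it is continuous, so it is closed proper convex. Since the conjugate of an infimal convolution is the sum of the conjugates, $M_{\gamma f}^* = f^* + \frac{\gamma}{2}\norm{\cdot}^2$, and therefore $M_{\gamma f} = M_{\gamma f}^{**} = \pa{f^* + \frac{\gamma}{2}\norm{\cdot}^2}^*$. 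Now $f^*$ is closed proper convex, so $f^* + \frac{\gamma}{2}\norm{\cdot}^2$ is closed proper and $\gamma$-strongly convex, and the conjugate of a closed proper $\gamma$-strongly convex function is $\frac{1}{\gamma}$-smooth \cite[Lecture 5]{vandenberghe2022om}; hence $M_{\gamma f}$ is $\frac{1}{\gamma}$-smooth — in particular differentiable on $\R^n$ with $\frac{1}{\gamma}$-Lipschitz gradient. Together with convexity this gives (3) and the differentiability half of (2).

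Finally I would identify the gradient. Write $\hat y := \prox_{\gamma f}(x)$. Using $\hat y$ as a feasible (generally suboptimal) point in the definition of $M_{\gamma f}$ at a nearby $x'$ gives $M_{\gamma f}(x') \le f(\hat y) + \frac{1}{2\gamma}\norm{x' - \hat y}^2$, while $M_{\gamma f}(x) = f(\hat y) + \frac{1}{2\gamma}\norm{x - \hat y}^2$; subtracting and expanding the squares yields
\eqe{
  M_{\gamma f}(x') - M_{\gamma f}(x) \le \dotprod{\frac{1}{\gamma}(x - \hat y), x' - x} + \frac{1}{2\gamma}\norm{x' - x}^2 .
}
Setting $x' = x + t e$ and letting $t \to 0^+$ and $t \to 0^-$ (the quadratic remainder is $o(t)$) forces the directional derivative of $M_{\gamma f}$ at $x$ in each direction $e$ to equal $\dotprod{\frac{1}{\gamma}(x - \hat y), e}$, and since $M_{\gamma f}$ is already known to be differentiable at $x$, this yields $\nabla M_{\gamma f}(x) = \frac{1}{\gamma}\pa{x - \prox_{\gamma f}(x)}$, completing (2).

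There is no deep obstacle here — the statement is standard — but two points need care. The first is in the middle step: one must write $M_{\gamma f}$ \emph{exactly} as the conjugate of the $\gamma$-strongly convex function $f^* + \frac{\gamma}{2}\norm{\cdot}^2$, which relies both on the infimal-convolution/conjugate rule and on $M_{\gamma f}$ being closed proper so that biconjugation returns it, and then on invoking the strong-convexity/smoothness duality with the correct constant. The second is the short sandwich argument in the last step that upgrades the one-sided quadratic bound to an exact gradient identity. As an alternative for the last step, one could instead combine Moreau's decomposition $x = \prox_{\gamma f}(x) + \gamma\,\prox_{f^*/\gamma}(x/\gamma)$ with the characterization of $\nabla\pa{f^* + \frac{\gamma}{2}\norm{\cdot}^2}^*(x)$ as the unique maximizer in the conjugate, but the direct argument above is cleaner.
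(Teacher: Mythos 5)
Your proof is correct. Note that the paper does not prove this lemma at all --- it is imported verbatim from the cited reference (Theorem 3 of Lecture 26 of the Davis notes) --- so there is no in-paper argument to compare against; what you have written is a valid self-contained derivation of the cited result. Each step checks out: the affine-minorant-plus-quadratic coercivity argument gives existence and uniqueness of $\prox_{\gamma f}(x)$ and finiteness of $M_{\gamma f}$; the identity $M_{\gamma f} = \pa{f^* + \frac{\gamma}{2}\norm{\cdot}^2}^*$ (via the infimal-convolution conjugate rule and biconjugation of the finite convex, hence closed proper, function $M_{\gamma f}$) combined with the strong-convexity/smoothness duality yields convexity and $\frac{1}{\gamma}$-smoothness with the right constant; and the one-sided quadratic upper bound at $\hat y = \prox_{\gamma f}(x)$, sandwiched from both sides of $t = 0$ against the already-established differentiability, pins down $\nabla M_{\gamma f}(x) = \frac{1}{\gamma}\pa{x - \prox_{\gamma f}(x)}$. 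It is worth remarking that your route through $\pa{f^* + \frac{\gamma}{2}\norm{\cdot}^2}^*$ is the same conjugate-duality machinery the paper itself leans on elsewhere (e.g.\ in the discussion surrounding \cref{1522} and the proof of \cref{1951}), so the argument is fully consistent with the paper's toolkit.
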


\begin{lemma} \label{conjugate_subdiff} \cite[Corollary E.1.4.4]{hiriart2004fundamentals}
  Assume that $f: \R^n \rightarrow \exs$ is closed proper convex, we have
  \eqe{
    y \in \partial f(x) \iff x \in \partial f^*(y), \ \forall x, y \in \R^n.
  }
\end{lemma}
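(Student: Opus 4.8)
The plan is to derive the equivalence from the Fenchel--Young relation together with the biconjugation identity $f^{**} = f$. Recall the Fenchel--Young inequality $f(x) + f^*(y) \ge \langle x, y\rangle$, which holds for any proper $f$ and all $x, y \in \R^n$ and is immediate from $f^*(y) = \sup_z \big(\langle y, z\rangle - f(z)\big) \ge \langle y, x\rangle - f(x)$. The first step I would carry out is the elementary characterization: for any proper $f$, one has $y \in \partial f(x)$ if and only if this inequality is tight, i.e. $f(x) + f^*(y) = \langle x, y\rangle$. Indeed, unfolding the subgradient inequality $f(z) \ge f(x) + \langle y, z - x\rangle$ for all $z$ and rearranging gives $\langle y, x\rangle - f(x) \ge \langle y, z\rangle - f(z)$ for all $z$; taking the supremum over $z$ yields $f^*(y) \le \langle y, x\rangle - f(x)$, which combined with Fenchel--Young forces equality. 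The converse direction just reverses these manipulations, using $f^*(y) \ge \langle y, z\rangle - f(z)$ for every $z$.

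The second step exploits the symmetry of the equality $f(x) + f^*(y) = \langle x, y\rangle$ under interchanging $f$ and $f^*$. Since $f$ is closed proper convex, $f^*$ is likewise closed proper convex (a supremum of affine functions, proper because $f$ admits an affine minorant), so the first step applies to $f^*$ as well and gives: $x \in \partial f^*(y)$ if and only if $f^*(y) + f^{**}(x) = \langle x, y\rangle$. Here the hypotheses on $f$ are invoked a second time, through the Fenchel--Moreau theorem, which yields $f^{**} = f$; hence the last equality coincides with $f(x) + f^*(y) = \langle x, y\rangle$. Chaining the two equivalences through this common condition delivers $y \in \partial f(x) \iff x \in \partial f^*(y)$.

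The only substantive ingredient is the biconjugation identity $f^{**} = f$ for closed proper convex $f$; everything else is bookkeeping with inequalities. Since the lemma is quoted from \cite{hiriart2004fundamentals}, I would simply invoke Fenchel--Moreau; were a self-contained argument wanted, the hard part would be precisely this, settled by applying the supporting-hyperplane theorem to the closed convex nonempty epigraph of $f$ to separate from it any point lying strictly below the graph. One small point I would check at the outset is that $f$ closed proper convex indeed makes $f^*$ proper, so that step one may legitimately be applied to $f^*$, and that whenever the Fenchel--Young equality holds the quantities $f(x)$ and $f^*(y)$ are both finite, so the rearrangements above never manipulate infinities.
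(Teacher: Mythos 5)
Your argument is correct and is essentially the standard proof of this fact: characterize $y \in \partial f(x)$ by equality in the Fenchel--Young inequality, apply the same characterization to $f^*$, and identify the two equality conditions via the Fenchel--Moreau theorem $f^{**}=f$, with the properness of $f^*$ duly checked. The paper does not prove this lemma (it is quoted from the reference, which establishes it by the same route), so there is nothing further to compare.
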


The following two lemmas pertain to subdifferential calculus. Specifically, \cref{subdiff} can be easily derived from \cite[Theorems 23.8]{rockafellar1970convex} by utilizing two key facts: (1) $\ri{\R^n} = \R^n$, and (2) $\ri{\dom{f_1}} \neq \emptyset$ if $\dom{f_1} \neq \emptyset$ \cite[Theorem 6.2]{rockafellar1970convex}.
\begin{lemma} \label{subdiff}
  Assume that $f_1: \R^n \rightarrow \exs$ is proper convex and $f_2: \R^n \rightarrow \R$ is convex, then $\partial (f_1 + f_2) = \partial f_1 + \partial f_2$.
\end{lemma}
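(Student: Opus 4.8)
The plan is to obtain the identity as a direct consequence of Rockafellar's subdifferential sum rule, \cite[Theorem 23.8]{rockafellar1970convex}: for proper convex functions $g_1, \dots, g_m$ on $\R^n$ one always has the inclusion $\partial(g_1 + \cdots + g_m)(x) \supseteq \partial g_1(x) + \cdots + \partial g_m(x)$ for every $x$, with equality at every $x$ provided the relative interiors $\ri{\dom{g_i}}$ have a common point. First I would dispatch the easy inclusion $\partial f_1(x) + \partial f_2(x) \subseteq \partial(f_1 + f_2)(x)$, which requires no hypothesis at all: adding the subgradient inequalities for $u \in \partial f_1(x)$ and $v \in \partial f_2(x)$ shows $u + v \in \partial(f_1 + f_2)(x)$. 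Hence all the content lies in the reverse inclusion.

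For the reverse inclusion I would verify that the hypotheses of \cite[Theorem 23.8]{rockafellar1970convex} are met by $f_1$ and $f_2$. Both are convex; $f_1$ is proper by assumption; and $f_2 : \R^n \rightarrow \R$ is real-valued, hence proper, since it never attains $\pm\infty$ and $\dom{f_2} = \R^n \neq \emptyset$. It then remains to check the qualification condition $\ri{\dom{f_1}} \cap \ri{\dom{f_2}} \neq \emptyset$. Since $\dom{f_2} = \R^n$ and $\ri{\R^n} = \R^n$, this reduces to showing $\ri{\dom{f_1}} \neq \emptyset$; and because $f_1$ is proper we have $\dom{f_1} \neq \emptyset$, so $\ri{\dom{f_1}} \neq \emptyset$ by \cite[Theorem 6.2]{rockafellar1970convex}. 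Thus the intersection equals $\ri{\dom{f_1}}$, which is nonempty, and \cite[Theorem 23.8]{rockafellar1970convex} yields $\partial(f_1 + f_2)(x) = \partial f_1(x) + \partial f_2(x)$ for every $x \in \R^n$, i.e. $\partial(f_1 + f_2) = \partial f_1 + \partial f_2$.

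I do not expect a genuine obstacle here: the argument is essentially a bookkeeping check that the constraint qualification in Theorem 23.8 is satisfied, and the only point needing care — that a nonempty convex set has nonempty relative interior, combined with $\ri{\R^n} = \R^n$ — is exactly the pair of auxiliary facts flagged in the excerpt. A self-contained alternative would apply a separation argument to the epigraph of $f_1$ and to suitable affine minorants of $f_2$, but routing through Theorem 23.8 is both shorter and consistent with the citation already in place.
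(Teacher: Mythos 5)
Your proof is correct and follows exactly the route the paper intends: it invokes \cite[Theorem 23.8]{rockafellar1970convex} and verifies the constraint qualification via the two facts the paper itself flags, namely $\ri{\R^n} = \R^n$ and the nonemptiness of $\ri{\dom{f_1}}$ for a proper convex $f_1$ by \cite[Theorem 6.2]{rockafellar1970convex}. Nothing further is needed.
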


\begin{lemma} \label{0142} \cite[Proposition 8 of Lecture 24]{davis2016mp}
  Assume that $g: \R^n \rightarrow \exs$ is closed proper convex, $A \in \R^{n \times m}$, and $b \in \R^n$. Let $f(x) = g(Ax-b)$, we have
  \eqe{
    \partial f(x) = A\T\partial g(Ax-b), \ \forall x \in \inte{\dom{f}}.
  }
\end{lemma}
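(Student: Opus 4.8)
The plan is to prove the two inclusions separately, isolating the one place where the interiority hypothesis is genuinely needed. First I would dispose of the inclusion $A\T\partial g(Ax-b) \subseteq \partial f(x)$, which in fact holds at every $x \in \dom{f}$ and requires no interiority. Given $v \in \partial g(Ax-b)$, the subgradient inequality $g(z) \ge g(Ax-b) + \dotprod{v, z-(Ax-b)}$ holds for all $z \in \R^n$; specializing to $z = A\tilde{x}-b$ collapses it to $f(\tilde{x}) \ge f(x) + \dotprod{A\T v, \tilde{x}-x}$ for all $\tilde{x} \in \R^m$, i.e.\ $A\T v \in \partial f(x)$. This direction is purely algebraic.

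The reverse inclusion $\partial f(x) \subseteq A\T\partial g(Ax-b)$ is the substantive one, and the plan is to route it through the Fenchel conjugate. The key intermediate claim is the conjugate formula for the affine composition,
\eqe{ \label{conj_aff}
  f^*(u) = \inf\set{g^*(v) + \dotprod{v, b} | A\T v = u},
}
\emph{together with attainment of this infimum whenever it is finite}. Formula \cref{conj_aff} is obtained by writing $f(x) = \sup_v\set{\dotprod{A\T v, x} - \dotprod{v, b} - g^*(v) | v \in \R^n}$ (valid since $g = g^{**}$ for closed proper convex $g$) and exchanging the resulting supremum and infimum, the inner supremum over $x$ forcing the constraint $A\T v = u$. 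The inequality $f^*(u) \le \inf\set{g^*(v)+\dotprod{v,b}|A\T v = u}$ is free from weak duality; I expect the reverse inequality and, above all, the attainment of the infimum to be the main obstacle, since it is precisely here that $x \in \inte{\dom{f}}$ becomes indispensable: it furnishes the constraint qualification (a Slater/relative-interior condition on the affine image $A\cdot - b$ meeting $\dom{g}$) needed for strong Fenchel duality. Everything else is bookkeeping.

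Granting \cref{conj_aff} with attainment, the reverse inclusion follows cleanly. Fix $x_0 \in \inte{\dom{f}}$, set $y_0 = Ax_0 - b$, and take $u \in \partial f(x_0)$. By \cref{conjugate_subdiff} in its Fenchel--Young form, $f(x_0) + f^*(u) = \dotprod{u, x_0}$, so $f^*(u) = \dotprod{u, x_0} - g(y_0)$. Let $v^*$ attain the infimum in \cref{conj_aff}, so that $A\T v^* = u$ and $f^*(u) = g^*(v^*) + \dotprod{v^*, b}$. Combining these and using $u = A\T v^*$,
\eqa{
  g^*(v^*) + \dotprod{v^*, b} = f^*(u) &= \dotprod{A\T v^*, x_0} - g(y_0) \\
  &= \dotprod{v^*, Ax_0 - b} + \dotprod{v^*, b} - g(y_0),
}
and cancelling $\dotprod{v^*, b}$ gives $g(y_0) + g^*(v^*) = \dotprod{v^*, y_0}$. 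This is the Fenchel--Young equality for $g$ at $y_0$, so \cref{conjugate_subdiff} yields $v^* \in \partial g(y_0)$; since $A\T v^* = u$, we conclude $u \in A\T\partial g(Ax_0 - b)$, which completes the argument.
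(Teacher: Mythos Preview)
The paper does not prove this lemma; it simply cites \cite[Proposition 8 of Lecture 24]{davis2016mp} and uses the statement as a black box, so there is no in-paper argument to compare yours against.

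Your plan is well structured, and you are right that the entire difficulty sits in the attainment step for the conjugate formula. However, the gap you flag cannot be closed under the stated hypotheses, because the lemma as written is actually false. The assumption $x_0 \in \inte{\dom f}$ does \emph{not} furnish the relative-interior qualification you invoke, and without such a qualification the reverse inclusion can fail. Concretely, let $n = 2$, $m = 1$, $b = 0$, let $A \in \R^{2\times 1}$ act by $t \mapsto (t,0)$, and define
\[
g(y_1,y_2) \;=\; \begin{cases} \max\bigl(|y_1| - \sqrt{y_2},\,0\bigr), & y_2 \ge 0,\\ +\infty, & y_2 < 0. \end{cases}
\]
This $g$ is closed proper convex (it is the maximum of $0$ and the separable convex function $|y_1| - \sqrt{y_2}$ on the half-space $y_2\ge 0$). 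Here $f(t) = g(t,0) = |t|$, so $\inte{\dom f} = \R$ and $\partial f(0) = [-1,1]$. Yet testing the subgradient inequality for $g$ at the origin along $(y_1,y_2) = (\pm s, s^2)$ with $s \downarrow 0$ forces the first component of any subgradient to vanish, and one finds $\partial g(0,0) = \{0\}\times(-\infty,0]$, whence $A\T\partial g(0,0) = \{0\}$, strictly smaller than $[-1,1] = \partial f(0)$. Thus only your easy inclusion $A\T\partial g(Ax-b) \subseteq \partial f(x)$ is valid in general. Equality requires a genuine constraint qualification---for instance Rockafellar's condition that the affine image $A(\R^m)-b$ meet $\ri{\dom g}$---which is strictly stronger than, and not implied by, $\inte{\dom f} \neq \emptyset$.
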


\section{Main Results}
\begin{assumption} \label{f}
  $f$ is closed proper convex.
\end{assumption}

Our main results are presented in the following theorem.
\begin{theorem} \label{augmented_dual}
  Assume that \cref{f} holds, and $\dom{\phi} \neq \emptyset$. Then: (1) $\dom{\phi_{\rho}} = \R^{p}$; (2) $\phi_{\rho}$ is concave and $\frac{1}{\rho}$-smooth; (3) For any $\lambda \in \R^p$, there exists at least one solution $x^+$ of $\min_{x \in \R^d} \cL_{\rho}(x, \lambda)$. Moreover, $\nabla \phi_{\rho}(\lambda) = A x^+ - b$ holds for any solution $x^+$.
\end{theorem}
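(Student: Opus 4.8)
The plan is to recognize $-\phi_{\rho}$ as a Moreau envelope and invoke \cref{moreau}. Write $\cL_0(x,\mu) = f(x) + \dotprod{\mu, Ax-b}$ for the ordinary (un-augmented) Lagrangian, so that $\inf_x \cL_0(x,\mu) = -f^*(-A\T\mu) - \dotprod{\mu,b} =: -d(\mu)$; thus $d = f^*\circ(-A\T) + \dotprod{\cdot,b}$ is minus the ordinary Lagrangian dual function. Completing the square in the multiplier gives the exact identity $\cL_{\rho}(x,\lambda) = \sup_{\mu}\set{\cL_0(x,\mu) - \frac{1}{2\rho}\norm{\mu-\lambda}^2}$. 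Taking $\inf_x$ on both sides and exchanging $\inf_x$ with $\sup_{\mu}$ — admissible because the bracketed function is closed convex in $x$ and strictly concave and coercive in $\mu$, so a convex--concave minimax theorem applies — yields
\[
  \phi_{\rho}(\lambda) = \sup_{\mu}\set{-d(\mu) - \frac{1}{2\rho}\norm{\mu-\lambda}^2} = -\inf_{\mu}\set{d(\mu) + \frac{1}{2\rho}\norm{\mu-\lambda}^2} = -M_{\rho d}(\lambda).
\]

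Everything then reduces to showing that $d$ is closed proper convex, after which \cref{moreau} with $\gamma = \rho$ does the rest. Closedness and convexity are clear: $d$ is the composition of the closed proper convex function $f^*$ (closed proper convex because $f$ is) with a linear map, plus a linear term. Properness of $d$ — that $d\not\equiv +\infty$, equivalently that some $\nu$ satisfies $f^*(-A\T\nu) < \infty$, equivalently that $\cL_{\rho}(\cdot,\lambda)$ is bounded below — is exactly where the standing assumption that \cref{main_pro} is solvable must be used; obtaining such a $\nu$ from an optimal solution of \cref{main_pro} (via Lagrangian duality for affinely constrained convex programs, e.g.\ a supporting-hyperplane argument applied to the perturbation function $u\mapsto\inf_{Ax-b=u} f(x)$) is the step I expect to be the main technical obstacle. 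Granting it, \cref{moreau} applied to $M_{\rho d}$ yields at once $\dom{\phi_{\rho}} = \R^p$ (part 1); that $\phi_{\rho} = -M_{\rho d}$ is $\frac{1}{\rho}$-smooth, and concave — the latter also being immediate since $\phi_{\rho}$ is an infimum of affine functions of $\lambda$ (part 2); and the gradient formula $\nabla\phi_{\rho}(\lambda) = -\frac{1}{\rho}\bigl(\lambda - \prox_{\rho d}(\lambda)\bigr)$.

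For part 3, set $\nu^+ := \prox_{\rho d}(\lambda)$, so that $\nabla\phi_{\rho}(\lambda) = \frac{1}{\rho}(\nu^+ - \lambda)$ and the optimality condition of the proximal subproblem reads $\frac{1}{\rho}(\lambda - \nu^+)\in\partial d(\nu^+)$. Expanding $\partial d(\nu^+)$ via the sum rule (\cref{subdiff}, valid since $\dotprod{\cdot,b}$ is real-valued convex) and the affine chain rule (\cref{0142}, applied to $f^*$ and the map $\nu\mapsto -A\T\nu$ — which requires $\nu^+\in\inte{\dom{d}}$, another point that will need care) gives $\partial d(\nu^+) = -A\,\partial f^*(-A\T\nu^+) + b$, so there exists $x^+\in\partial f^*(-A\T\nu^+)$ with $-Ax^+ = \frac{1}{\rho}(\lambda - \nu^+) - b$, i.e.\ $Ax^+ - b = \frac{1}{\rho}(\nu^+ - \lambda) = \nabla\phi_{\rho}(\lambda)$. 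By \cref{conjugate_subdiff}, $x^+\in\partial f^*(-A\T\nu^+)$ is equivalent to $-A\T\nu^+\in\partial f(x^+)$; since $\lambda + \rho(Ax^+ - b) = \nu^+$, \cref{subdiff} (the quadratic-plus-linear part of $\cL_{\rho}$ being real-valued convex) then gives $0 = -A\T\nu^+ + A\T\nu^+ \in \partial f(x^+) + A\T\bigl(\lambda + \rho(Ax^+ - b)\bigr) = \partial_x\cL_{\rho}(x^+,\lambda)$, so $x^+$ minimizes $\cL_{\rho}(\cdot,\lambda)$.

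Finally, for the last assertion let $\tilde x$ be any minimizer of $\cL_{\rho}(\cdot,\lambda)$ and put $\tilde\nu := \lambda + \rho(A\tilde x - b)$. From $0\in\partial_x\cL_{\rho}(\tilde x,\lambda)$ and \cref{subdiff} one gets $-A\T\tilde\nu\in\partial f(\tilde x)$, hence $\tilde x\in\partial f^*(-A\T\tilde\nu)$ and equality in Fenchel--Young (\cref{conjugate_subdiff}): $f(\tilde x) + f^*(-A\T\tilde\nu) = -\dotprod{\tilde\nu, A\tilde x}$. Substituting this into $d(\tilde\nu) + \frac{1}{2\rho}\norm{\tilde\nu - \lambda}^2$ and using $\tilde\nu - \lambda = \rho(A\tilde x - b)$ shows this quantity equals $-\cL_{\rho}(\tilde x,\lambda) = -\phi_{\rho}(\lambda) = M_{\rho d}(\lambda)$; since the infimum defining $M_{\rho d}(\lambda)$ is strongly convex and hence uniquely attained at $\prox_{\rho d}(\lambda) = \nu^+$, we conclude $\tilde\nu = \nu^+$, whence $A\tilde x - b = \frac{1}{\rho}(\tilde\nu - \lambda) = \nabla\phi_{\rho}(\lambda)$.
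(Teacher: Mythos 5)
Your overall strategy is the same as the paper's: both identify $-\phi_{\rho}$ with the Moreau envelope of the negated ordinary dual ($d=-\phi$ in your notation) via completing the square and an $\inf$--$\sup$ exchange, and then invoke \cref{moreau}. Two remarks on that identity. First, ``strictly concave and coercive in $\mu$'' is not by itself a license to swap: the inner maximizer $\mu=\lambda+\rho(Ax-b)$ is in general unbounded as $x$ ranges over $\dom{f}$, so you cannot reduce to a compact $\mu$-set and apply Sion. The paper instead invokes Fenchel--Rockafellar duality \cite[Corollary 31.2.1]{rockafellar1970convex}, whose qualification holds automatically because the quadratic piece in $\mu$ has full domain; your swap is true, but for that reason, not the one you give. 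Second, and more seriously, the step you flag as ``the main technical obstacle'' --- properness of $d$ --- is not merely technical: it cannot be closed under the stated hypotheses. Take $f(x)=-\sqrt{x_1x_2}+\iota_{\R_+^2}(x)$, $A=[1\ \ 0]$, $b=0$. Then $f$ is closed proper convex and \cref{main_pro} is solvable with value $0$, yet $\cL_{\rho}((1,t),\lambda)=-\sqrt{t}+\lambda+\rho/2\to-\infty$ as $t\to\infty$, so $\phi_{\rho}\equiv-\infty$ and $\phi\equiv-\infty$: the perturbation function $u\mapsto\inf_{x_1=u}f$ equals $0$ at $u=0$ and $-\infty$ for $u>0$, so it admits no supporting hyperplane and $d$ is improper. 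Your proposed derivation of properness from solvability of \cref{main_pro} therefore fails, and in fact conclusions (2) and (3) of the theorem fail for this instance; an additional hypothesis such as $\phi\not\equiv-\infty$ (equivalently, $\dom{f^*}$ meets the range of $A\T$) is genuinely needed. (The paper's proof passes over the same point: it applies \cref{moreau} to $-\phi$ without verifying that $-\phi$ is proper.)

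For part (3) there is a second gap you flag but do not close: applying \cref{0142} to $f^*\circ(-A\T)$ at $\nu^+$ requires $\nu^+\in\inte{\dom{d}}$, and $\inte{\dom{d}}$ can be empty even in completely benign cases --- e.g.\ $d=p=1$, $f(x)=x$, $A=1$ gives $\dom{d}=\{-1\}$, while the theorem's conclusion plainly holds there since $\cL_{\rho}(\cdot,\lambda)$ is a coercive quadratic. The paper avoids this by differentiating the other conjugate representation, $\phi_{\rho}(\lambda)=-f_{\rho}^*(-A\T\lambda)-\lambda\T b$, for which the interior condition needed in \cref{0142} is $\lambda\in\inte{\dom{\phi_{\rho}}}=\R^p$, supplied by part (1); I would switch to that route. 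Your closing argument for the invariance of $Ax^+-b$ over the solution set --- showing via Fenchel--Young equality that $\tilde\nu=\lambda+\rho(A\tilde x-b)$ attains the strongly convex Moreau infimum and hence equals the unique point $\prox_{\rho d}(\lambda)$ --- is correct and is a clean alternative to the paper's argument (which deduces that $A\partial f_{\rho}^*(-A\T\lambda)$ is a singleton from differentiability of $\phi_{\rho}$), but as written it inherits the two gaps above.
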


The following lemma presents some sufficient and necessary/unnecessary conditions for $\dom{\phi} \neq \emptyset$.
\begin{lemma} \label{dom_phi}
  $\dom{\phi} \neq \emptyset$ \textbf{if and only if} any of the following holds:
  \begin{enumerate}[~~(1)]
    \item there exists $\lambda \in \R^p$ such that $\phi(\lambda) > -\infty$;
    \item $\dom{f^*} \cap \Range{A\T} \neq \emptyset$.
  \end{enumerate}
  Assume that \cref{f} holds, $\dom{\phi} \neq \emptyset$ \textbf{if} any of the following holds\footnote{The examples provided below are illustrative rather than exhaustive; additional sufficient conditions can be readily derived by leveraging the two necessary and sufficient conditions.}:
  \begin{enumerate}[~~(a)]
    \item There exists $\lambda \in \R^p$ such that the solution to $\min_{x \in \R^d} f(x) + \dotprod{\lambda, Ax - b}$ exists, i.e., there exists $(x, \lambda) \in \R^d \times \R^p$ such that $\0 \in \partial f(x) + A\T\lambda$;
    \item The solution to the (standard) dual problem of \cref{main_pro}, i.e., $\max_{\lambda \in \R^p} \ \phi(\lambda)$, exists;
    \item Slater's condition holds for \cref{main_pro}, i.e., there exists $x \in \ri{dom{f}}$ such that $Ax = b$;
    \item There exists $(x, \lambda) \in \R^d \times \R^p$ that satisfies the KKT conditions of \cref{main_pro}, i.e.,
    \eqe{
      &\0 \in \partial f(x) + A\T\lambda, \\
      &Ax = b;
    }
    \item $\inf_x f(x) > -\infty$;
    \item $f$ is coercive.
  \end{enumerate}
\end{lemma}

\begin{proof}
  We first discuss the sufficient and necessary conditions. (1) is obvious, while (2) follows directly from
  \eqe{
    \phi(\lambda) = -f^*\pa{-A\T\lambda} -\lambda\T b.
  }

  For the sufficient conditions, (a) and (b) are both sufficient for (1).
  According to the existence of a solution to \cref{main_pro} and \cite[Corollary 31.2.1]{rockafellar1970convex}, (c) is sufficient for (b).
  (d) is sufficient for both (a) and (b).
  (e) is sufficient for (2) due to two facts: (i) $f^*(\0) = \sup_x \0\T x - f(x) = -\inf_x f(x) < +\infty$; and (ii) $\0 \in \Range{A\T}$.
  (f) is sufficient for (e).
\end{proof}

\begin{remark} \label{0036}
  Theorem \ref{augmented_dual} reveals that the augmented Lagrangian dual function of \cref{main_pro} is smooth everywhere only $f$ is closed proper convex and $\dom{\phi} \neq \emptyset$. This result is significantly more general than existing results, which require stronger assumptions, such as $f = g + \iota_{\sX}$, where $\sX \subseteq \R^d$ is a compact convex set and $g: \sX \rightarrow \R$ is a continuous convex function \cite{lan2016iteration,hong2017linear}.
\end{remark}

We first establish the equivalence between $M_{\rho (-\phi)}$ and $-\phi_{\rho}$, where
\eqe{
  M_{\rho (-\phi)}(\lambda) = \inf_{w}\pa{-\phi(w) + \frac{1}{2\rho}\norm{w-\lambda}^2}
}
is the Moreau envelope of $-\phi$.
\begin{lemma} \label{1951}
  Assume that \cref{f} holds, we have
  \eqe{
    M_{\rho (-\phi)}(\lambda) = -\phi_{\rho}(\lambda), \ \forall \lambda \in \R^p,
  }
  i.e., $M_{\rho (-\phi)} = -\phi_{\rho}$.
\end{lemma}

\begin{proof}
  Define
  \eqe{
    \Gamma_{\lambda}(w, x) = \frac{1}{2\rho}\norm{w-\lambda}^2 - \dotprod{w, A x - b} - f(x),
  }
  we have
  \eqe{ \label{00160}
    M_{\rho (-\phi)}(\lambda) = \inf_{w}\sup_{x} \Gamma_{\lambda}(w, x).
  }
  We first prove that
  \eqe{ \label{21450}
    \inf_{w}\sup_{x} \Gamma_{\lambda}(w, x) = \sup_{x}\inf_{w} \Gamma_{\lambda}(w, x), \ \forall \lambda \in \R^p.
  }
  Define $q(w) = \frac{1}{2\rho}\norm{w-\lambda}^2$, we can easily obtain its Fenchel conjugate: $q^*(y) = \frac{\rho}{2}\norm{y}^2 + y\T\lambda$. Clearly $\dom{q^*} = \R^p$ and $q^{**} = q$. Let $\ell(y) = y - b$, it is obvious that $\dom{\pa{q^* \circ \ell}} = \R^p$ and $q^* \circ \ell$ is closed proper convex.
  It follows that
  \eqe{ \label{23480}
    \sup_{x}\inf_{w} \Gamma_{\lambda}(w, x) &= \sup_{x}\pa{\inf_{w}\pa{\frac{1}{2\rho}\norm{w-\lambda}^2 - \dotprod{w, A x - b}} - f(x)} \\
    &= \sup_{x}\pa{-q^*\pa{A x - b} - f(x)} \\
    &= -\inf_{x}\pa{f(x) + \pa{q^* \circ \ell}(A x)}.
  }
  According to \cite[Proposition E.1.3.1]{hiriart2004fundamentals}, we have
  \eqe{
    \pa{q^* \circ \ell}^*(w) = q^{**}(w) + w\T b = \frac{1}{2\rho}\norm{w-\lambda}^2 + w\T b.
  }
  It follows that
  \eqe{ \label{23481}
    \inf_{w}\sup_{x} \Gamma_{\lambda}(w, x) &= \inf_{w}\pa{\frac{1}{2\rho}\norm{w-\lambda}^2 + w\T b + \sup_{x}\pa{-w\T A x - f(x)}} \\
    &= -\sup_{w}\pa{-\pa{q^* \circ \ell}^*(w) - f^*\pa{-A\T w}}.
  }
  Recall that $f$ and $q^* \circ \ell$ are both closed proper convex. By applying \cite[Corollary 31.2.1]{rockafellar1970convex} to \cref{23480,23481}, we can conclude that \cref{21450} holds if
  \eqe{ \label{23580}
    A\ri{\dom{f}} \cap \ri{\dom{\pa{q^* \circ \ell}}} \neq \emptyset.
  }
  Since $f$ is proper, $\dom{f} \neq \emptyset$, then $\ri{\dom{f}} \neq \emptyset$ \cite[Theorem 6.2]{rockafellar1970convex}. Additionally, since $\ri{\dom{\pa{q^* \circ \ell}}} = \ri{\R^p} = \R^p$, \cref{23580} holds, and consequently, \cref{21450} also holds.
  Note that
  \eqe{
    \arg\min_{w}\pa{\frac{1}{2\rho}\norm{w-\lambda}^2 - \dotprod{w, A x - b}} = \lambda + \rho(A x - b).
  }
  Combining \cref{00160,21450,23480}, we can finally obtain
  \eqe{
    M_{\rho (-\phi)}(\lambda) &= \sup_x - f(x) -\dotprod{\lambda, A x - b} - \frac{\rho}{2}\norm{A x - b}^2 \\
    &= -\phi_{\rho}(\lambda), \ \forall \lambda \in \R^p.
  }
\end{proof}

With \cref{1951,dom_phi}, we are now ready to prove \cref{augmented_dual}.
\begin{appendix_proof}[\cref{augmented_dual}]
  Given \cref{f}, $f^*$ is closed proper convex by \cite[Theorem E.1.1.2]{hiriart2004fundamentals}; consequently, $-\phi$ is closed and convex. Since $\phi(\lambda) = \inf_x\set{f(x) + \dotprod{\lambda, A x - b}}$ and $f$ is proper, it follows that $-\phi(\lambda) > -\infty$ for all $\lambda \in \R^p$. Combining this with the assumption that $\dom{\phi} \neq \emptyset$, we conclude that $-\phi$ is closed proper convex.

  According to \cref{moreau,1951}, we immediately obtain (1) and (2). A direct result of (1) is that $\inte{\dom \phi_{\rho}} = \inte{\R^p} = \R^p$. As $f$ is closed proper convex, so is $f_{\rho}$, which implies that $f^*_{\rho}$ is also closed proper convex \cite[Theorem E.1.1.2]{hiriart2004fundamentals}. Then, applying \cref{subdiff,0142} to \cref{1522} gives
  \eqe{ \label{0106}
    \partial \phi_{\rho}(\lambda) = A\partial f_{\rho}^*\pa{-A\T\lambda} - b, \ \forall \lambda \in \inte{\dom \phi_{\rho}} = \R^p.
  }
  From (2) we know that $\phi_{\rho}$ is differentiable everywhere, hence \cref{0106} implies two facts: (1) $\partial f_{\rho}^*\pa{-A\T\lambda} \neq \emptyset$; (2) $A\partial f_{\rho}^*\pa{-A\T\lambda}$ has only one element. Let $x^+$ be any element in $\partial f_{\rho}^*\pa{-A\T\lambda}$, we have $\nabla \phi_{\rho} = A x^+ - b$. By \cref{conjugate_subdiff}, we have
  \eqe{
    -A\T\lambda \in \partial f_{\rho}(x^+),
  }
  which implies that $x^+$ is a solution to $\min_{x \in \R^d} \cL_{\rho}(x, \lambda)$, as established by \cref{subdiff} and \cite[Theorem 27.1]{rockafellar1970convex}.
\end{appendix_proof}

\section{Conclusion}
In this paper, we establish the smoothness of the augmented Lagrangian dual function $\phi_{\rho}$ and guarantee the existence of solutions to the primal subproblem $\min_{x \in \R^d} \cL_{\rho}(x, \lambda)$ for any $\lambda \in \R^p$. Crucially, these fundamental properties hold for any general closed proper convex function $f$, provided that the standard dual domain is nonempty, i.e., $\dom{\phi} \neq \emptyset$. These results substantially weaken the stringent assumptions typically required in the literature, providing a more general and solid theoretical foundation for augmented Lagrangian-based methods.

\bibliographystyle{IEEEtran}
\bibliography{../public/bib}
\end{document}